\def\bb#1\eb{\textcolor{blue}
{#1}} %
\def\br#1\er{\textcolor{red}
{#1}} %
\newcommand{\R}{\mathds R}
\newcommand{\N}{\mathds N}
\newcommand{\s}{\mathbb{S}}
   \def\br#1\er{\textcolor{red}{#1}} %
      \def\bb#1\eb{\textcolor{blue}{#1}} %
\title[Causal homotopy classes]{A note on the causal homotopy classes of a globally hyperbolic spacetime}
\author[P. Morales]{Pablo Morales Álvarez}
\author[M. S\'anchez]{Miguel S\'anchez}
\address{Departamento de Geometr\'{\i}a y Topolog\'{\i}a,  Universidad de Granada 
%\hfill\break\indent
\hfill\break\indent
Facultad de Ciencias, Campus Fuentenueva s/n,
 \hfill\break\indent E-18071 Granada, Spain}
\email{pablomorales@correo.ugr.es sanchezm@ugr.es}
\begin{document}
\newtheorem{thm}{Theorem}[section]
\newtheorem{prop}[thm]{Proposition}
\newtheorem{lemma}[thm]{Lemma}
\newtheorem{cor}[thm]{Corollary}
\theoremstyle{definition}
\newtheorem{defi}[thm]{Definition}
\newtheorem{notation}[thm]{Notation}
\newtheorem{exe}[thm]{Example}
\newtheorem{conj}[thm]{Conjecture}
\newtheorem{prob}[thm]{Problem}
\newtheorem{rem}[thm]{Remark}
\newtheorem{conv}[thm]{Convention}
\newtheorem{crit}[thm]{Criterion}
\newtheorem{claim}[thm]{Claim}

\newcommand{\ben}{\begin{enumerate}}
\newcommand{\een}{\end{enumerate}}

\newcommand{\bit}{\begin{itemize}}
\newcommand{\eit}{\end{itemize}}

\begin{abstract}
Globally hyperbolic spacetimes admitting infinitely many causal (and timelike) homotopy classes of curves joining two prescribed points, are exhibited and discussed.
\end{abstract}

\maketitle 
%\tableofcontents

\vspace*{-.5cm}

\section{Introduction}\label{section1}
In Lorentzian Geometry, the spaces $\mathcal{C}(p,q)$ and $C(p,q)$ containing, resp., all the causal and timelike curves connecting two causally related points $p<q$ become of crucial interest. In particular, it is well-known that globally hyperbolic spacetimes can be characterized as those causal spacetimes where all the spaces $\mathcal{C}(p,q)$ become compact for a suitable topology (see the review \cite{S_berlin});
%\footnote{Say, for the $C^0$ topology of curves. One could even remove the condition of being causal for the spacetime if the parametrization of the curves is taken into account, see the discussion in \cite{S_berlin}.}; 
moreover, in these spacetimes, each space $\mathcal{C}(p,q)$ always contains an element (necessarily a  causal pregeodesic) with maximum length. It is also natural to consider topological properties of $\mathcal{C}(p,q)$ and $C(p,q)$, such as their classes of homotopy and, more properly in the Lorentzian context, the classes of {\em causal} or {\em timelike} homotopies, where the homotopy is mediated via paths (the \emph{longitudinal curves} of the homotopy) that also lie in   
$\mathcal{C}(p,q)$ or $C(p,q)$. 
%(i.e. the \emph{longitudinal curves} of the homotopy belong to $\mathcal{C}(p,q)$ or $C(p,q)$).

The purpose of the present note is  
{\em to exhibit examples of globally  hyperbolic spacetimes where $\mathcal{C}(p,q)$ and $C(p,q)$ contain  infinitely many classes of causal and timelike homotopy, resp.} The examples are robust, i.e., independent of details such as  dimensions, the global topology of the manifold or the existence of symmetries (the latter appear just to make easier  computations). Moreover, they do not rely on any ``mathematical trick'' about the smoothability of the considered curves. Indeed, the regularity of the curves will be optimal, in the sense that the  infinitely many non-homotopic curves to be obtained will be $C^\infty$ differentiable, 
but each two of these curves will not be connected by a causal homotopy even if the longitudinal curves of the possible homotopies are allowed to be just causal-continuous (the minimum regularity allowed for causal curves, see the section of preliminaries).

Even though quite a few of properties of these causal homotopy classes are known (see for example \cite{Beem, Gal, Law, MinSan}), we are not aware of such a type of examples. As we will see, 
they allow for an interesting comparison with the finiteness of  the (purely topological) homotopy classes for causal curves joining two points (which was established in \cite{Kim}), and they also allow for a better understanding of some subtleties related to the existence of either an {\em arbitrary close causal curve} or a {\em variation by causal curves}.

\begin{comment}
(a) the  necessity of the finiteness of the (purely topological) homotopy classes for curves joining two curves, 
(b) the existence of maximizers of the length in each causal and timelike homotopy class, and 
(c) s between properties of lightlike geodesics that involve  the existence of either an {\em arbitrary close causal curve} or a {\em variation by causal curves}.

\smallskip
\end{comment}

%\noindent 
The paper is organized as follows. After some  precise definitions on homotopy classes and a technical preliminary lemma concerning Brouwer's topological degree in Section \ref{s2}, the examples  are constructed in Section \ref{s3}. The construction of the examples is rather intuitive and easy to understand; nevertheless, the formal proof of the intended properties is not so trivial, as it requires implicitly the previous topological tools included in section \ref{s2}.
%(concretely, Lemma \ref{Lemma} which follows from Theorem \ref{BrouwerDegree}, recall Remark \ref{r2.4}). 
The example for causal homotopy classes is described first, while the case of timelike homotopy classes will be a (somewhat burdensome) modification of the previous one. Finally, in section \ref{TopologicalAndDiscuss} we compare these examples with the result in \cite{Kim} about (purely topological) homotopies, and in section \ref{LastSect} a discussion on related questions and physical interpretations is carried out.
%points (a), (b), (c) above, is carried in the last section. % \ref{s4}.

\section{Preliminaries} \label{s2}

We will follow usual conventions as in \cite{Beem, MinSan, ON}. In particular, $(M,g)$ denotes a spacetime, i.e. a time-oriented connected Lorentzian manifold. For points $p,q\in M$,  $p\neq q$, which are  causally related ($p< q$), $\mathcal{C}(p,q)$ will denote the set of all the  future-directed causal-continuous (called non-spacelike in \cite{Beem}) curves $\Gamma: [a,b]\rightarrow M$ from $p$ to $q$. Let us recall this concept, which is a natural generalization of the typical notions of {\em causal} for smooth curves, and can be also found in classical references such as \cite[p. 54]{Beem}.
\begin{defi}
Let $(M,g)$ be a spacetime and $I\subseteq \R$ an interval. A continuous curve $\gamma:I\to M$ is called \emph{future-directed causal-continuous} if for any $t\in I$ there exists a convex neighborhood\footnote{Recall that a convex neighborhood is a normal neighborhood of all of its points, cf. \cite[Section 5.2]{ON} or \cite[Section 3.1]{Beem}; the symbol $<_U$ denotes the causal relation regarding $U$ as a spacetime.} $U$ of $\gamma(t)$ and $\varepsilon>0$ such that $\gamma(t-\varepsilon,t+\varepsilon)\subset U$ and $$\gamma(t_1)<_U \gamma(t_2)\quad \forall \ t_1,t_2\in (t-\varepsilon,t+\varepsilon)\textrm{ with $t_1<t_2$}.$$  
\end{defi}

Past-directed causal-continuous curves are defined analogously and a causal-continuous curve is either a future-directed or a past-directed. It is well-known that these curves, even if non-smooth, can be assumed to be parametrized so that they are locally Lipschitz (and globally Lipschitz for any auxiliary Riemannian metric, as $[a,b]$ is compact) and, thus, absolutely continuous and  almost everywhere differentiable  (cf. \cite[p. 75]{Beem}, \cite[Appendix A]{CFS}). By convenience, we consider a prescribed interval $[a,b]$ (which could be taken equal to $[0,1]$). Then, $C(p,q)$ will denote the subset of  $\mathcal{C}(p,q)$ containing its piecewise-smooth   $C^1$ timelike curves. 
 
Such orders of regularity will be the most appropriate for the robustness of our examples, since the assumed low regularity for causal curves (which permits homotopies with low regularity too), will not avoid the existence of infinitely many causal homotopy classes, each one containing smooth curves. Moreover, the smooth timelike curves in infinitely many different timelike homotopy classes to be constructed will also remain in different causal homotopy classes (recall that, in general, a single causal homotopy class may have more than one timelike homotopy class\footnote{See \cite{MinSan} for this and other subtleties. Notice that causal chains (as if the curves were piecewise smooth) are used there, following the approach  detailed in \cite{Beem}.}).

Specifically, a {\em causal homotopy} between two curves $\Gamma_0, \Gamma_1 \in \mathcal{C}(p,q)$  is a continuous  mapping $H:[0,1]\times [a,b] \to M$ such that
\begin{enumerate}
\item $H(0,t)=\Gamma_0(t)$, $H(1,t)=\Gamma_1(t)$,
\item $t\mapsto H(s,t)=: \Gamma_s(t)$ belongs to $\mathcal{C}(p,q)$ for every $s\in [0,1]$.
\end{enumerate} 
In the case that such a $H$ exists, $\Gamma_0, \Gamma_1$ are said to be causally homotopic. 
Analogously, when $\Gamma_0, \Gamma_1 \in C(p,q)$ one can speak of a {\em timelike homotopy} by imposing additionally that $H$ is piecewise smooth and each $\Gamma_s\in C(p,q)$. Nevertheless, as explained above, we will not use this concept   but the one of (non-)causally homotopic curves. Of course, $\Gamma_0$ and $\Gamma_1$ are called just {\em homotopic} or, for emphasis here, {\em topologically homotopic}, when a continuous $H$ as above exists by relaxing  (2) into the weak condition that the endpoints of all $\Gamma_s$ are fixed (equal to $p,q$).

Now, we deal with a  preliminary topological question, which is summarized in Lemma \ref{Lemma}. In Remark \ref{r2.4} we explain how this result will be subsequently used as an essential part to check the properties of our examples. First, recall the following intuitive consequence of Brouwer's degree theory (cf. 
 \cite[Lemma 5]{FloSan}, see also Figure~\ref{SquareFig} for a geometrical interpretation):

\begin{thm}
\label{BrouwerDegree}
Let $a<b$, $c<d$, and $\theta_0<\theta<\theta_1$ be real numbers. Let $F:[a,b]\times [c,d]\to \R$ be a continuous function with 
\begin{gather*}
F(s,c)=\theta_0, %\quad \forall s\in[a,b], 
\qquad
F(s,d)=\theta_1 %\quad \forall s\in[a,b]
\end{gather*}
for all $s\in [a,b]$. Then, there exists a connected subset $\Lambda\subseteq F^{-1}(\theta)$ that intersects $\{a\}\times [c,d]$ and $\{b\}\times [c,d]$.
\end{thm}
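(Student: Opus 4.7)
The plan is to argue by contradiction and reduce the statement to a plane-separation fact. The starting observation is that $X:=F^{-1}(\theta)$ is a compact subset of $[a,b]\times(c,d)$ that \emph{separates} the bottom edge $[a,b]\times\{c\}$ from the top edge $[a,b]\times\{d\}$ inside the square: for any continuous path $\gamma$ joining these two horizontal edges, the function $F\circ\gamma$ goes from $\theta_0$ to $\theta_1$ and therefore attains the intermediate value $\theta$, so $\gamma$ meets $X$.

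Suppose for contradiction that no connected subset of $X$ meets both vertical edges. Set $A:=X\cap(\{a\}\times[c,d])$ and $B:=X\cap(\{b\}\times[c,d])$; these are disjoint compact sets and, by assumption, no connected component of $X$ meets both of them. A standard separation result for compact Hausdorff spaces then yields a clopen decomposition $X=X_A\sqcup X_B$ with $A\subseteq X_A$ and $B\subseteq X_B$. Using normality of the square, thicken $X_A$ and $X_B$ into disjoint open neighbourhoods $U_A\supseteq X_A$ and $U_B\supseteq X_B$ in $[a,b]\times[c,d]$; since $X_A$ is disjoint from the right edge and $X_B$ from the left edge, we may shrink $U_A,U_B$ so that $\overline{U_A}$ misses $\{b\}\times[c,d]$ and $\overline{U_B}$ misses $\{a\}\times[c,d]$.

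The crux is then to produce a continuous path inside the square from the bottom edge to the top edge that avoids $X$, contradicting the first paragraph. The compact set $K:=[a,b]\times[c,d]\setminus(U_A\cup U_B)$ is disjoint from $X$, so $F-\theta$ has constant sign on each connected component of $K$; moreover, the bottom edge lies in $K\cap\{F<\theta\}$ and the top edge in $K\cap\{F>\theta\}$. To bridge $K$ across $U_A\cup U_B$, I would pick a rectangular grid on the square finer than the Lebesgue number of the open cover $\{U_A,\,U_B,\,F^{-1}(-\infty,\theta),\,F^{-1}(\theta,+\infty)\}$, so that each closed cell is contained in some single member of the cover, and label each cell by the corresponding symbol in $\{A,B,-,+\}$. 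By construction no $A$-cell touches the right edge and no $B$-cell touches the left edge. A connectedness argument on the dual graph of the grid then produces an edge-adjacent chain of cells labelled only by $\{-,+\}$ that runs from a cell touching the bottom edge to one touching the top edge; concatenating segments through the interiors of these cells yields the forbidden path.

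The main obstacle is this last combinatorial step, which is essentially the two-dimensional separation principle in disguise and must be executed carefully. An alternative that entirely bypasses the grid bookkeeping is to invoke the classical \emph{Alexander separation lemma}: any closed subset of a square which separates its two horizontal edges must contain a connected subcontinuum joining its two vertical edges. Applied to the set $X=F^{-1}(\theta)$ together with the separation property established in the first paragraph, this lemma delivers the desired $\Lambda$ directly and makes the construction of a contradicting path unnecessary.
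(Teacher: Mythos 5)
Your overall strategy is viable and genuinely different from the paper's treatment: the paper does not prove Theorem \ref{BrouwerDegree} at all, but quotes it from \cite{FloSan} (Lemma 5 there), where it is obtained from Brouwer's topological degree; your route is point-set/continuum-theoretic. The reductions you carry out are correct: the intermediate value argument shows that $X=F^{-1}(\theta)$ is a compact subset of the open horizontal strip meeting every bottom-to-top path; the ``cut wire'' theorem for compact Hausdorff spaces (components $=$ quasicomponents) does give the clopen splitting $X=X_A\sqcup X_B$ under your contradiction hypothesis; and the thickening into disjoint open sets $U_A, U_B$ with $\overline{U_A}$ missing the right edge and $\overline{U_B}$ missing the left edge is fine (you should also shrink them to stay off the two horizontal edges, which is possible since $X$ is compactly contained in the open strip, and is needed later so that the cells along the bottom and top rows are labelled $-$ or $+$).

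The genuine gap is exactly where you locate it, and it is not cured by the phrase ``a connectedness argument on the dual graph.'' What you need is a discrete crossing duality of Hex type: \emph{if no edge-adjacent chain of $\{-,+\}$-cells joins the bottom row to the top row, then some chain of $\{A,B\}$-cells joins the left column to the right column}, and for this to be true the adjacency conventions must be chosen carefully (edge-adjacency for one family forces vertex-adjacency, i.e.\ ``king moves,'' for the blocking family, or else one works on a triangulated grid). Granting that lemma, your contradiction does close neatly from facts already in place: two cells sharing a point cannot be labelled $A$ and $B$ (they would meet $U_A\cap U_B=\emptyset$) nor $-$ and $+$ (a common point would satisfy $F<\theta$ and $F>\theta$), so a blocking chain is monochromatic, and an all-$A$ chain cannot reach the right edge while an all-$B$ chain cannot reach the left edge. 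But the duality lemma itself carries essentially the same two-dimensional content as the theorem and requires its own (elementary but careful) combinatorial proof; asserting it as mere connectedness of the dual graph is the missing step. As for the fallback: the ``separation lemma'' you invoke --- a closed subset of the square separating the two horizontal edges contains a continuum joining the two vertical edges --- is, composed with your first paragraph, literally the statement of Theorem \ref{BrouwerDegree}, so using it makes the argument circular as a self-contained proof; it is legitimate only as a citation (in which case the cut-wire/thickening machinery becomes superfluous, and you are then doing what the paper itself does by quoting \cite{FloSan}), and you would need a precise reference (the result follows, e.g., from unicoherence of the square or a Janiszewski-type theorem) rather than a name.
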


\begin{figure}[H]
\centering
\includegraphics[scale=0.9]{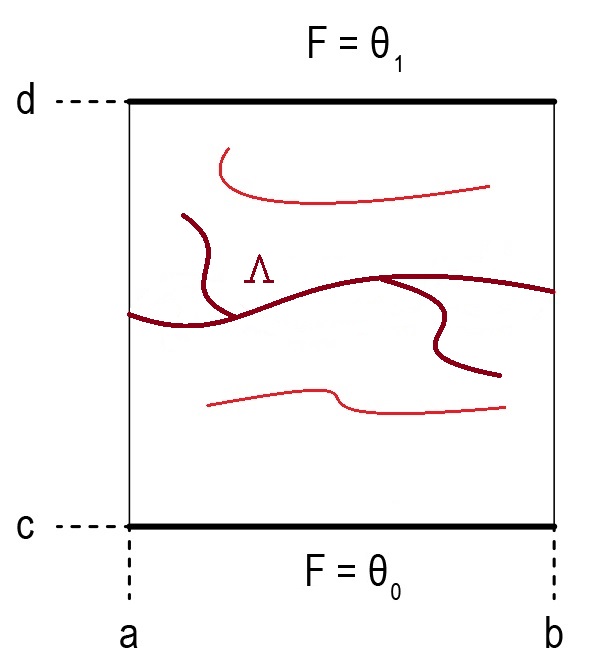}
\vspace{0.2cm}
\caption{Graphic representation for Theorem \ref{BrouwerDegree}.  For each $s\in [a,b]$ elementary Bolzano theorem ensures the existence of at least one $t\in(c,d)$ with $F(s,t)= \theta$. Theorem \ref{BrouwerDegree} goes further ensuring that such points $(s,t)$ (which appear in red) include a connected set $\Lambda$ (in bold red) which connects the vertical  lines $s=a$ and $s=b$.   When applied to Lemma \ref{Lemma}, $\theta_0$ and $\theta_1$ represent the polar angle of the endpoints of the longitudinal curves in the homotopy.}
\label{SquareFig}
\end{figure}
Let us apply this theorem in order to prove the (simple) property of the sphere stated in Lemma \ref{Lemma}. Let $\s^2 (\hookrightarrow \R^3)$ be the usual 2-sphere of radius 1, and use spherical coordinates $x=\sin\theta\cos\varphi$, $y=\sin\theta\sin\varphi$, $z=\cos\theta$, with polar
$\theta \in (0,\pi)$, and azimuth $\varphi \in (-\pi,\pi)$. The  meridian non-covered by these coordinates will be denoted $\varphi= \pm \pi$; however, the polar map $\theta: \s^2\rightarrow \R$ is well-defined and continuous everywhere. In the sequel, $Eq:=\theta^{-1}(\pi/2)$ will denote the equator, and $\mathcal{N}:=(0,0,1)$, $\mathcal{S}:=(0,0,-1)$ the north and south poles.

\begin{lemma}\label{Lemma}
Let $\alpha_0,\alpha_1:[0,\pi]\to \s^2$ be curves from $\mathcal{N}$ to $\mathcal{S}$ which parametrize, resp., the meridians of azimuthal angles $\varphi_0$, $\varphi_1\in(-\pi,\pi)$ for some $\varphi_0<\varphi_1$. Let $H:[0,1]\times [0,\pi] \to \s^2$ be a (topological) homotopy connecting $\alpha_0$ and $\alpha_1$. Then, the image of $H$ contains at least one of the two following sets: 
\begin{gather}
Eq \cap \left\{ p\in\s^2: \varphi(p)\in (\varphi_0,\varphi_1)\right\}, \label{eeq1} \\
Eq \cap \left(\left\{ p\in\s^2: \varphi(p) \in (-\pi,\varphi_0)\cup (\varphi_1,\pi)\right\} \cup \{\varphi=\pm \pi\}\right). \label{eeq2}
\end{gather}
\end{lemma}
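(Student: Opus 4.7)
The plan is to apply Theorem \ref{BrouwerDegree} to the continuous function $F:=\theta\circ H:[0,1]\times[0,\pi]\to\R$, taking $[a,b]=[0,1]$, $[c,d]=[0,\pi]$, $\theta_0=0$, $\theta_1=\pi$, and level value $\theta=\pi/2$. The boundary conditions would then hold automatically: since every longitudinal curve $\Gamma_s=H(s,\cdot)$ goes from $\mathcal{N}$ to $\mathcal{S}$, one has $F(s,0)=\theta(\mathcal{N})=0$ and $F(s,\pi)=\theta(\mathcal{S})=\pi$ for all $s\in[0,1]$. The theorem would then yield a connected subset $\Lambda\subseteq F^{-1}(\pi/2)$ intersecting both $\{0\}\times[0,\pi]$ and $\{1\}\times[0,\pi]$.

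Next I would push $\Lambda$ forward by $H$ to obtain a connected subset $H(\Lambda)$ of the equator $Eq$ which, crucially, contains the two crossing points $p_i:=\alpha_i([0,\pi])\cap Eq$ for $i=0,1$. Indeed, $H(\Lambda)\subseteq Eq$ by construction of $\Lambda$, and since $\alpha_0$ is the meridian of azimuth $\varphi_0$, it meets $Eq$ at the unique point $p_0$ with $\varphi(p_0)=\varphi_0$. Hence if $(0,t^{*})\in\Lambda\cap(\{0\}\times[0,\pi])$, then $H(0,t^{*})=\alpha_0(t^{*})\in\alpha_0\cap Eq=\{p_0\}$, so $p_0\in H(\Lambda)$; the analogous argument at $s=1$ gives $p_1\in H(\Lambda)$.

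To conclude, I would invoke the elementary topological fact that any connected subset $C$ of $Eq\cong\s^1$ containing two distinct points $p_0,p_1$ must contain at least one of the two open arcs they determine. This is easy to verify: if some point $p$ of an open arc $A_1$ were missing from $C$, then $C$ would sit inside $Eq\setminus\{p\}$ (homeomorphic to an open interval) as a connected subset containing $p_0$ and $p_1$, and would therefore necessarily contain the entire sub-arc between them there, which is exactly $A_2\cup\{p_0,p_1\}$. Identifying these two open arcs with the sets displayed in \eqref{eeq1} and \eqref{eeq2} closes the argument.

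The main obstacle, I expect, is the initial identification: recognizing that the polar function $\theta$ plays precisely the role of $F$ in Theorem \ref{BrouwerDegree}, with the fixed endpoints $\mathcal{N},\mathcal{S}$ supplying the required boundary values ``for free'', and that the output of that theorem is tailored to produce a \emph{connected} crossing set rather than merely a family of isolated crossings. Once this is noticed, the remaining steps reduce to continuity of $H$ plus elementary connectivity in $\s^1$, so no genuine difficulty is anticipated.
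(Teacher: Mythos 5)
Your proposal is correct and follows essentially the same route as the paper: compose the homotopy with the polar map to get $F=\theta\circ H$, apply Theorem \ref{BrouwerDegree} at the level $\pi/2$ to obtain a connected set $\Lambda$ joining the sides $s=0$ and $s=1$, and push it forward to a connected subset of $Eq$ containing the equatorial points of azimuth $\varphi_0$ and $\varphi_1$. Your concluding step (a connected subset of $\s^1$ containing two points contains one of the two arcs, argued by deleting a point) is just a slightly more explicit version of the paper's final remark about lifting $\varphi$ to the covering $\R\to Eq\cong\s^1$.
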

\begin{proof}
Putting $F:=\theta\circ H:[0,1]\times [0,\pi]\to \R$, one has a continuous function with%which takes each $H(s,t)$ to its polar angle. For having $H$ fixed endpoints, we obtain
$$F(s,0)=0\quad\textrm{and}\quad F(s,\pi)=\pi\quad\textrm{for every $s\in [0,1]$}.$$
Then, theorem \ref{BrouwerDegree} provides a connected subset $\Lambda$ of $F^{-1}(\pi/2)$ that intersects $\{0\}\times [0,\pi]$ and $\{1\}\times [0,\pi]$. Therefore, $H(\Lambda)$ is a connected subset of $Eq$  which contains points of azimuthal angle $\varphi_0$ and $\varphi_1$, and the result follows (lift and extend continuously the coordinate $\varphi$ to the natural covering $\R\rightarrow Eq \cong \s^1 $).
\end{proof}

\begin{comment}
\begin{lemma}\label{Aux}
Let $C$ be a connected subset of the equator of $\s^2$, containing points $p,q$ with azimuthal angle $\varphi,\varphi_1\in (-\pi,\pi)$, $\varphi<\varphi_1$. Then, $C$ contains either every point of azimuthal angle in $(\varphi,\varphi_1)$ or every point of azimuthal angle in $(-\pi,\varphi)\cup (\varphi_1,\pi)$.  
\end{lemma}
\begin{proof}
If $C$ is the whole equator $E$, it is obvious. Otherwise, take $p\in E$\textbackslash$C$, and notice that an azimuthal angle $\widetilde{\varphi}$ can be continuously defined along $E$\textbackslash$\{p\}$. Hence, the image of $C$ by $\widetilde{\varphi}$ is an interval which contains either $\varphi$ and $\varphi_1$ or $\varphi+2\pi$ and $\varphi_1$. These two possibilities yield, respectively, those stated at the lemma. 
\end{proof}
\end{comment}

\begin{rem}\label{r2.4}  Of course, the previous lemma can be improved because, as one would expect, even if the intersections with $Eq$ are not taken in \eqref{eeq1} and \eqref{eeq2}, one of the two regions delimited by meridians would be included in the image of $H$.  However, this would prolong  the proof\footnote{Say, if Theorem \ref{BrouwerDegree} was used for each parallel $\theta\in(0,\pi)$ as done for $Eq$, then one should justify that the same region $\varphi\in (\varphi_0,\varphi_1)$ or $\varphi\not\in (\varphi_0,\varphi_1)$ can be chosen with independence of $\theta$.} and will not be necessary. What we would like to stress is that, at least for the examples below, a kind of topological discussion becomes necessary\footnote{We have used Brouwer's but there are related topological theorems as Poincaré-Miranda (see the didactical overview \cite{KUL}) that would work too.}. Indeed, the application of  Lemma \ref{Lemma} in the next section will ensure (rigorously) that either the piece 
\eqref{eeq1} or the piece \eqref{eeq2} of the equator 
 must be swept when one meridian is deformed continuously   into another. As we will see, such a piece will be used to introduce an appropriate conformal factor on the sphere. The latter will force some longitudinal curves of any homotopy in $\s^2$ between the meridians to have a length bigger than $\pi$, and this property will imply the required  inexistence of causal homotopies between some lightlike geodesics in the spacetime $\R\times\s^2$.
%the region covered by a homotopy between two meridians of the sphere. Since we will introduce a conformal factor on the sphere that will wittily modify the usual length in some regions of it, under the assumption that infinite explicit curves are not causally homotopic we will be able to guarantee the existence of some curve in the homotopy passing through a convenient point of this region, and this will lead to a contradiction
%(see the detailed proofs at the end of section \ref{subsecCausal}, for the causal case, and in Claim \ref{ExampleTimelike} for the timelike one).
\end{rem}

\section{Explicit examples} \label{s3}
Our examples will be provided in the globally hyperbolic spacetime $(M,g)$ given by a (naturally time-oriented) product Lorentzian manifold with $M=\R \times \s^2$, $g=-dt^2+ \pi^*g^\star$. Here, $t:M\rightarrow \R$, $\pi:M\rightarrow \s^2$ are the natural projections, 
%$\s^2 (\hookrightarrow \R^3)$ is the usual sphere of radius 1, 
and $g^\star=\Omega g_0$ is a metric on $\s^2$ conformal to its natural metric $g_0$ induced from $\R^3$, being $\Omega>0$ a conformal factor to be determined in each case.
\subsection{Causal homotopy classes}\label{subsecCausal}
Our aim in this point is to prove:

\begin{claim}
For some appropriate choice of the conformal factor $\Omega$, there exist infinitely many classes of causal homotopy between $(0,\mathcal{N})$ and $(\pi,\mathcal{S})$ in $(M,g)$.
\end{claim}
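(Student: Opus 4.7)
The plan is to choose the conformal factor $\Omega$ so that a countable sequence of meridians of $\s^2$ produces null geodesics from $(0,\mathcal N)$ to $(\pi,\mathcal S)$, while any causal homotopy between two of them is forced, by sharp length bounds, to keep its longitudinal curves entirely within $\{\Omega=1\}$; Lemma~\ref{Lemma} will then deliver the contradiction.

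I first fix a sequence $\{\varphi_n\}_{n\in\N}\subset(0,\pi)$ (say $\varphi_n=\pi(1-2^{-n})$) together with pairwise disjoint short intervals $I_n=[\varphi_n-\delta_n,\varphi_n+\delta_n]\subset(0,\pi)$, and build a smooth $\Omega:\s^2\to[1,\infty)$ with $\Omega\equiv 1$ on an open neighborhood of each meridian $\gamma_n$ at azimuth $\varphi_n$, and $\Omega>1$ elsewhere. A concrete model, smooth at the poles, is
\begin{equation*}
\Omega(\theta,\varphi)=1+\rho(\theta)\,h(\varphi),
\end{equation*}
where $\rho\in C^\infty([0,\pi])$ is a bump vanishing near $\{0,\pi\}$ and $h\geq 0$ is smooth and $2\pi$-periodic, vanishing exactly on $\bigcup_n I_n$. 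Since $\Omega\equiv 1$ in a neighborhood of each $\gamma_n$, that meridian is a $g^\star$-geodesic of length $\pi$, so the lift $\Gamma_n(r):=(r,\gamma_n(r))$, with $\gamma_n$ parametrized by $g_0$-arclength $r\in[0,\pi]$, is a future-pointing null geodesic in $(M,g)$ from $(0,\mathcal N)$ to $(\pi,\mathcal S)$; this gives infinitely many elements of $\mathcal C((0,\mathcal N),(\pi,\mathcal S))$.

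The core step is to prove that $\Gamma_m$ and $\Gamma_n$ lie in different causal homotopy classes for $m\neq n$. Suppose, for contradiction, that $H(s,r)=(\tau_s(r),\sigma_s(r))$ is a causal homotopy between them. Causal continuity of each $\Gamma_s$ gives $\dot\tau_s\geq|\dot\sigma_s|_{g^\star}$ almost everywhere, so integrating,
\begin{equation*}
\pi=\tau_s(b)-\tau_s(a)\geq L_{g^\star}(\sigma_s)\geq L_{g_0}(\sigma_s)\geq d_{g_0}(\mathcal N,\mathcal S)=\pi,
\end{equation*}
forcing every inequality to be an equality. Hence $\sigma_s$ is a $g_0$-minimizer from $\mathcal N$ to $\mathcal S$ (its image is a meridian), and $L_{g^\star}(\sigma_s)=L_{g_0}(\sigma_s)$ combined with $\Omega\geq 1$ yields $\Omega(\sigma_s(r))=1$ almost everywhere that $|\dot\sigma_s|_{g_0}(r)>0$. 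Reparametrizing by $g_0$-arclength and using closedness of $\{\Omega=1\}$ to absorb any stationary segments, the entire image of $\sigma_s$ lies in $\{\Omega=1\}$ for every $s$, whence the spatial homotopy $\sigma(s,r):=\sigma_s(r)$ takes values in $\{\Omega=1\}$.

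Applying Lemma~\ref{Lemma} to $\sigma$ (a topological homotopy between $\gamma_m$ and $\gamma_n$) gives that its image contains one of the two arcs of $Eq$ joining the azimuths $\varphi_m$ and $\varphi_n$. By construction both arcs meet $\{\Omega>1\}$: the direct arc contains a gap between consecutive $I_k$'s, while the complementary arc covers the half $\varphi\in(-\pi,0]$ in which no $I_n$ lies. This contradicts $\mathrm{Im}(\sigma)\subset\{\Omega=1\}$. I expect the main technical obstacle to be the equality-case reasoning that controls the full image (not merely the moving locus) of the Lipschitz curve $\sigma_s$, which requires the reparametrization and closure argument just sketched.
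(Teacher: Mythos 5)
Your proposal is correct and follows essentially the paper's own strategy: a conformal factor $\Omega\geq 1$ equal to $1$ along infinitely many meridians (yours accumulating at $\varphi=\pi$ instead of $\varphi=0$) whose lifts are null curves, the causality bound $L_{g^\star}(\sigma_s)\leq\pi$ on the spherical projections, and Lemma~\ref{Lemma} to force the homotopy through equatorial points with $\Omega>1$, giving the same length contradiction; your equality-case packaging is just a reordering of the paper's strict-inequality step. Two small points should be made explicit: a smooth $h\geq 0$ cannot vanish \emph{exactly} on $\bigcup_n I_n$ (zero sets of continuous functions are closed and the $\varphi_n$ accumulate at $\pi$), so take $h$ to vanish on $\overline{\bigcup_n I_n}$ --- harmless, since the points of $\{\Omega>1\}$ you use on either equatorial arc avoid $\varphi=\pm\pi$ --- and you should require $\rho(\pi/2)>0$ (the paper takes $f_1\equiv 1$ near $\theta=\pi/2$) so that the relevant equatorial points indeed satisfy $\Omega>1$.
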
 
\noindent In fact, the relevant properties to be satisfied by  $\Omega$ will be: 

\begin{enumerate}[(a)]
\item $\Omega\geq 1$.

\item $\Omega=1$ on the meridians of azimuthal angle $\varphi_n:=1/(n\pi)$, $n\in\N$. 

\item For some $0<\varepsilon_1<\varepsilon_2<\pi/2$ one has: (c1) $\Omega(\theta, \varphi)=1$ when $|\theta-\pi/2|>\varepsilon_2$ and (c2) $\Omega(\theta, \varphi)>1$ 
when $|\theta-\pi/2|<\varepsilon_1$ and $0<\varphi<\pi/2$, $\varphi\neq \varphi_n$, for all $n\in\N$.
\end{enumerate}

\noindent A concrete choice of such a factor 
$\Omega\in C^{\infty}(\s^2)$
would be
\begin{equation}\label{OriginalOmega}
\Omega (\theta, \varphi )= 1+f_1(\theta)f_2(\varphi), \qquad \qquad  \Omega(\theta, \pm\pi)=1,
\end{equation}
where $f_1\geq 0$ is any smooth function with compact support in $(0,\pi)$ and $f_1(\theta)=1$ in some neighborhood of $\theta= \pi/2$, and $f_2\geq 0$ is any smooth function with compact support in $(-\pi, \pi)$ satisfying\footnote{The $C^\infty$ smoothness of functions defined like $f_2$ around zero is a typical exercise of analysis, taking into account that, for all $m>0$, the function $e^{-1/x^2}/x^m$ and all its derivatives  vanish at $x=0$ (typically, this type of functions are used to construct bump functions).}
\begin{equation*}
f_2(\varphi) = e^{-1/\varphi^2}\sin^2(1/\varphi) \qquad \forall \varphi\in (-\pi/2,\pi/2), \quad (\varphi(0)=0).
\end{equation*}

In particular, the property (a) yields that, for any  curve\footnote{Due to the low regularity allowed for the longitudinal curves in causal homotopies, we will need to deal with absolutely continuous curves in $\s^2$. In this setting, the length of curves is still calculated as an integral, maintaining the same properties as for (piecewise) smooth curves  (cf. \cite{Anne} for a specific study of such properties, or Section 5.3 and Exercise 5.13 in \cite{Peter} for a general overview).} $\gamma$ on $\s^2$, the $g_0$-length $L_{g_0}(\gamma )$ and $g^\star$-length $L_{g^\star}(\gamma)$ satisfy   
\begin{equation}\label{RelationLengths}
L_{g_0}(\gamma)\leq L_{g^\star}(\gamma),  
\end{equation}
with strict inequality whenever $\gamma$ crosses a point with $\Omega>1$.

Let us now specify the infinite non-homotopic curves. For each $n\in \N$, let $\gamma_n:[0,\pi]\to \s^2$ be the $g_0$-arclength 
parametrized meridian of azimuthal angle $\varphi_n$ from $\mathcal{N}$ to $\mathcal{S}$. 
Lift each $\gamma_n$  to the  curve \begin{equation}
\Gamma_n(t)=(t,\gamma_n(t)), \qquad t\in [0,\pi],
\label{egamma}\end{equation} 
on the spacetime $M$. Notice that the curves $\Gamma_n$ are  lightlike (by the condition (b) below the claim) and they have the 
same endpoints $(0,\mathcal{N})$ and $(\pi,\mathcal{S})$. So, we have just to  show that all these curves lie in different causal homotopy classes.

\smallskip

\noindent {\em Proof of the causal homotopy non-equivalence}.
Assume that there existed indexes $i<j$  with $\Gamma_i$ and $\Gamma_j$ connected by a 
%(fixed endpoints) 
causal homotopy 
$\Upsilon_s(t)$, $s\in [0,1]$, $t\in [0,\pi]$. Projecting onto $\s^2$, one obtains a continuous mapping 
\begin{gather*}
H:[0,1]\times [0,\pi]\to \s^2 \\
(s,t)\mapsto \alpha_s(t),
\end{gather*}
where every $\alpha_s$ is a (absolutely continuous) curve from $\mathcal{N}$ to $\mathcal{S}$ and, as each $\Upsilon_s$ is causal, 
\begin{equation}\label{lengths}
L_{g^\star}(\alpha_s)\leq\pi \quad\forall s\in [0,1].
\end{equation}
Since according to \eqref{egamma} it is $\alpha_0=\gamma_i$ and $\alpha_1=\gamma_j$, we can apply lemma \ref{Lemma} to $H$, obtaining a  $s_0\in (0,1)$ such that $\alpha_{s_0}$ passes through a point with $\Omega > 1$ (recall property (c2) of $\Omega$). Then, inequality \eqref{RelationLengths} holds strictly, and together with \eqref{lengths} yields the contradiction 
\begin{equation*}
\pi=d_{g_0}(\mathcal{N},\mathcal{S})\leq L_{g_0}(\alpha_{s_0})<L_{g\star}(\alpha_{s_0})\leq \pi.
\end{equation*}

\subsection{Timelike homotopy classes} In the previous example we considered causal homotopy classes with no timelike curves. Next,  a new conformal factor $\tilde \Omega$ will be constructed as a small modification of previous $\Omega$ (see figure \ref{grafico1}),  so that those classes will contain timelike curves (which will remain in different causal homotopy classes).

Let  $\{\gamma_n\}_{n\in \N}$ be the same curves in $\s^2$ as  below \eqref{RelationLengths}, put 
 $q_n:=\gamma_n(\pi/2) (\in Eq)$, and let $p_n\in Eq$ be the middle points with $\varphi(p_n)=((1/n)+1/(n+1))/2\pi$ for all $n\in \N$.
Now, modify  $\Omega$ 
%in the previous example 
%(formula \eqref{OriginalOmega}) 
as $\tilde \Omega=\Omega-f$, where $f\in C^{\infty}(\s^2)$, $f\geq 0$ must vanish except on 
a small neighborhood $V_n$ of each $q_n$, being $f(q_n)>0$ and $f\leq \nu_n$ on each  $V_n$ for constants $\{\nu_n\} \searrow 0$. 
$V_n$ will be chosen as a $g_0$ ball, with small diameter $\varepsilon_n$ so that the closures $\overline{V}_n$ do not contain any $p_m$, they are pairwise disjoint, and  $\{\varepsilon_n\}\searrow 0$.

\begin{claim}\label{TechnicalLemma} The values $\nu_n, \varepsilon_n>0$ can be chosen small enough such that, calling again $g^\star=\tilde \Omega \cdot g_0$, no (absolutely continuous)  curve
%\footnote{Piecewise smooth $C^1$ regularity would suffice fort the curve in the case of applications to timelike homotopies, while $H^1$ regularity  would be required for causal homotopies (recall \cite[Appendix A]{CFS}).}
$\gamma$ in $\s^2$ can simultaneously satisfy: 
\begin{enumerate}[(i)]
\item It connects $\mathcal{N}$ with $\mathcal{S}$.
\item\label{item2} It passes through $p_n$ for some $n\in \N$.
\item $L_{g^\star}(\gamma)\leq \pi$.
\end{enumerate}
\end{claim}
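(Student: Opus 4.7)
The plan follows the strategy of Section~\ref{subsecCausal}, refined to absorb the new ``savings'' introduced by the cutoff $f$. Using property (c2) of $\Omega$, for each $n\in\N$ set $\delta_n:=\Omega(p_n)-1>0$ and, by continuity, pick a small $g_0$-ball $B_n$ of radius $r_n>0$ around $p_n$ on which $\Omega\geq 1+\delta_n/2$. Since $p_n\neq q_m$ for all $m,n$, the $\varepsilon_m$ may be chosen so small that the closures $\overline V_m$ are pairwise disjoint and miss every $\overline B_n$; then $\tilde\Omega=\Omega\geq 1+\delta_n/2$ on $\overline B_n$, $\tilde\Omega\geq 1-\nu_m$ on $V_m$, and $\tilde\Omega=\Omega\geq 1$ on the remainder of $\s^2$.

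Arguing by contradiction, suppose some $\gamma$ satisfies (i)--(iii) for some $n$. Since $(\s^2,g^\star)$ is a compact Riemannian manifold, $\gamma$ can be replaced (without loss of generality) by the concatenation of two $g^\star$-minimizing geodesics from $\mathcal{N}$ to $p_n$ and from $p_n$ to $\mathcal{S}$, which still satisfies (i)--(iii). Splitting this new $\gamma$ into its portion inside $\overline B_n$ (of $g_0$-length at least $2r_n$, since $\gamma$ enters $B_n$, reaches $p_n$ and exits) and its complement (of $g_0$-length at least $\pi-2r_n$, as it joins $\mathcal{N}$ and $\mathcal{S}$ via $\partial B_n$), and bounding $\sqrt{\tilde\Omega}$ piecewise, one arrives at
\[
L_{g^\star}(\gamma) \;\geq\; \pi + c_n - \sum_m \bigl(1-\sqrt{1-\nu_m}\bigr)\, L_{g_0}(\gamma \cap V_m),
\]
where $c_n := 2r_n\bigl(\sqrt{1+\delta_n/2}-1\bigr)>0$. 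Combined with (iii), this forces the total savings on the right to be at least $c_n$, so the task reduces to preventing any such $\gamma$ from accumulating that much saving.

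The principal obstacle is that $c_n\to 0$ super-exponentially as $n\to\infty$: one has $r_n=O(1/n^2)$ and $\delta_n=O(e^{-cn^2})$ due to the factor $e^{-1/\varphi^2}$ in $f_2$, so no uniform bound on the savings can work. To overcome this one exploits two features of the $g^\star$-minimizer: (a) each component of $\gamma\cap V_m$ is essentially a $g_0$-geodesic chord of the small ball $V_m$, hence $L_{g_0}(\gamma\cap V_m)\leq 2\varepsilon_m$ up to a term controlled by $\nu_m$; and (b) comparing $\gamma$ with the full meridian through $p_n$ (which has $g_0$-length $\pi$ and, by the choice $\varepsilon_m<|\varphi(p_n)-\varphi_m|$, meets no $V_m$) shows that the extra $g_0$-length that $\gamma$ can afford over this meridian is at most $L_{g_0}(\gamma)-\pi=O(\nu_{\max})$, so $\gamma$ can only deviate into those $V_m$ whose $g_0$-distance to the meridian is of order $\nu_{\max}$. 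Choosing the pairs $(\nu_m,\varepsilon_m)$ inductively so that $\nu_m\varepsilon_m \ll c_m$ and $\nu_{\max}$ is smaller than the relevant geometric distances, the savings available to a minimizer through $p_n$ are then confined to finitely many $V_m$'s with $m$ close to $n$, and their total is strictly less than $c_n$, yielding the required contradiction for every $n$ simultaneously.
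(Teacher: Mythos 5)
Your setup and the key inequality are correct and run parallel to the paper's own scheme: your ball $B_n$ with $\Omega\geq 1+\delta_n/2$ plays the role of the paper's $U_n$, the reduction to a concatenation of two $g^\star$-minimizers is legitimate, and the estimate $L_{g^\star}(\gamma)\geq \pi + c_n - \sum_m\bigl(1-\sqrt{1-\nu_m}\bigr)L_{g_0}(\gamma\cap V_m)$ is fine. The gap is in the concluding bookkeeping. First, the confinement claim fails: since $\varphi(p_n)\to 0$ and $\varphi_m\to 0$, for any fixed threshold (your ``$\nu_{\max}$ smaller than the relevant geometric distances'') \emph{all} balls $V_m$ with $m$ above some fixed index lie within that azimuthal distance of the meridian through $p_n$; in particular $V_{n-1},V_{n-2},\dots$ are reachable from $p_n$ at an extra $g_0$-cost of order $1/n^2$ only, so the visited set is not ``finitely many $m$ close to $n$'' in any sense that helps. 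Second, and decisively, for a visited index $m<n$ the condition $\nu_m\varepsilon_m\ll c_m$ gives no control relative to $c_n$, because $c_n/c_m\to 0$ super-exponentially (the very difficulty you identify); hence ``their total is strictly less than $c_n$'' is unjustified, and no comparison of those savings against $c_n$ can work, since the constraints on $(\nu_{n-1},\varepsilon_{n-1})$ are index-$(n-1)$ quantities which dwarf $c_n$.

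The correct comparison — and this is what the paper's conditions (1)--(2) encode — is between the savings available in a visited ball and the extra $g_0$-length needed to reach it, with $c_n$ reserved for the case where no $V_m$ is visited (and, by your own condition, for the adjacent balls $V_n,V_{n+1}$, where $c_{n+1}\leq c_n$ does suffice). Concretely, if $\gamma$ passes through $p_n$ and meets $V_m$ at some point $y$, then, both $p_n$ and $q_m$ lying on the equator, $L_{g_0}(\gamma)\geq \pi + d_{g_0}(q_m,p_n)-\varepsilon_m$, and $d_{g_0}(q_m,p_n)\geq D_m:=\min_k d_{g_0}(q_m,p_k)=\tfrac{1}{2\pi m(m+1)}$, a quantity depending only on $m$ and only polynomially small. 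Requiring for every $m$ that $\varepsilon_m$ and $\nu_m$ be much smaller than $D_m$ (so that the possible saving, which is at most of order $\nu_m L_{g_0}(\gamma\cap V_m)\leq \nu_m L_{g_0}(\gamma)$, is beaten by this travel cost) closes the case analysis uniformly in $n$; note this also spares you the chord bound $L_{g_0}(\gamma\cap V_m)\lesssim 2\varepsilon_m$, which as stated tacitly assumes a bounded number of components of $\gamma\cap V_m$ that you never justify. As written, the final step ``total savings $<c_n$'' is a genuine gap, even though a correct proof exists along your general lines.
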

\noindent  To check this, recall that $\Omega(p_n)>1$ and take a neighborhood $U_n\ni p_n$  such that $\Omega\geq 1+\mu_n$ on $U_n$ for some constant $\mu_n>0$. Choose also $U_n$ with small $g_0$-diameter $\delta_n>0$ so that no $U_n$ intersect any $V_m$. Once constructed these sequences $\{\mu_n\}, \{\delta_n\} \rightarrow 0$, it suffices to take $\{\nu_n\}$ and $\{\varepsilon_n\}$ small enough in comparison with $\{\mu_n\}$ and $\{\delta_n\}$, in such a way that:

\begin{enumerate}

\item The  difference between the $g^\star$ and the $g_0$ lengths for going from $p_n$ to any point outside $U_n$ must be greater than the 
maximum difference between the $g_0$ and the $g^\star$ distances between any two points in $\overline{V}_{n\pm 1}$. More precisely, recall that, by   the Claim  \ref{TechnicalLemma} (\ref{item2}), the curve is forced to pass through some $p_n$; then, the $g^\star$-length of the portion of curve $\gamma$ in $U_n$ must exceed its $g_0$-length in a quantity bigger than minus the difference between the $g^\star$-length of any portion of the curve  $\gamma|_{[a',b']}$ 
included in $V_{n+1}$ or $V_{n-1}$ and the   $(g_0|_{V_{n \pm 1}})$-distance  between the endpoints of  $\gamma|_{[a',b']}$.

%length of $\gamma$ saved by $g-g^\star$ while $\gamma$ remains in a close neighborhood $V_{n\pm 1}$, must be smaller than the length for $\mu_n g_0$ necessary for going from $p_n$ to outside of $U_n$.
%\item 

\item The $V_n$'s are far enough from each other (i.e. $\varepsilon_n$ are small enough) so that if $\gamma$ moves from one $V_i$ to another $V_j$ with, say, $i<j$, this is $g_0$ and  $g^\star$-longer than $2 \varepsilon_i$, i.e. the maximum $g_0$ length to  cross  $V_i$ by means of any meridian.   
%\end{itemize}

\end{enumerate}

\noindent Notice that once these choices have been done, a straightforward discussion of cases for $\gamma$ (dependending on if it crosses none, one or more than one neighborhood $V_n$) shows that if it satisfied the three conditions in the claim then $\mathcal{N}$ and $\mathcal{S}$ could be connected with a curve of $g_0$-length less than $\pi$, a contradiction.

Once  Claim \ref{TechnicalLemma} has been established, the required example will be provided by 
the same curves $\gamma_n$ and $\Gamma_n$ introduced in \eqref{egamma} and above it. More precisely:

\begin{claim}\label{ExampleTimelike}
For any conformal factor $\tilde{\Omega}\in C^\infty(\s^2)$ fulfilling the conditions of Claim \ref{TechnicalLemma}, there exist infinitely many timelike curves from $(0,\mathcal{N})$ to $(\pi,\mathcal{S})$ in $(M,g)$ lying in different causal homotopy classes.
\end{claim}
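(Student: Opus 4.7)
The plan is to mimic the construction of Subsection \ref{subsecCausal} but to reparametrize each meridian so that its lift becomes timelike rather than lightlike. The key observation is that, along the meridian $\gamma_n$, property (b) forces $\Omega\equiv 1$ and hence $\tilde{\Omega}\equiv 1-f$; since $f(q_n)>0$ and $f$ is continuous, one obtains
\begin{equation*}
\ell_n:=L_{g^\star}(\gamma_n)=\int_0^\pi \sqrt{\tilde{\Omega}(\gamma_n(t))}\,dt<\pi.
\end{equation*}
Let $\bar{\gamma}_n:[0,\ell_n]\to\s^2$ be the $g^\star$-arclength reparametrization of $\gamma_n$, and define the candidate timelike lift
\begin{equation*}
\tilde{\Gamma}_n(\tau):=\bigl(\tau,\,\bar{\gamma}_n(\tfrac{\ell_n}{\pi}\tau)\bigr),\qquad \tau\in [0,\pi].
\end{equation*}
Its tangent has Lorentzian norm squared equal to $(\ell_n/\pi)^2-1<0$, so $\tilde{\Gamma}_n$ is a $C^\infty$ timelike curve from $(0,\mathcal{N})$ to $(\pi,\mathcal{S})$, i.e., $\tilde{\Gamma}_n\in C((0,\mathcal{N}),(\pi,\mathcal{S}))$.

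To establish the pairwise causal non-equivalence for $i<j$, I would argue by contradiction. Suppose $\tilde{\Gamma}_i$ and $\tilde{\Gamma}_j$ are causally homotopic through $\Upsilon_s(\tau)=(t_s(\tau),\alpha_s(\tau))$. From future-directed causality, the almost-everywhere inequality $|\alpha_s'|_{g^\star}\leq t_s'$ integrates to
\begin{equation*}
L_{g^\star}(\alpha_s)\leq t_s(\pi)-t_s(0)=\pi\qquad\forall s\in[0,1].
\end{equation*}
The projected homotopy $H(s,\tau):=\alpha_s(\tau)$ joins the meridians of azimuthal angle $\varphi_i$ and $\varphi_j$, so Lemma \ref{Lemma} forces the image of $H$ to cover one of the equatorial arcs \eqref{eeq1} or \eqref{eeq2}. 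Each of these arcs contains some midpoint $p_n$: the short arc contains $p_n$ for every $n\in\{i,\ldots,j-1\}$, while the long arc contains $p_n$ for every $n\geq j$. In either case some longitudinal curve $\alpha_{s_0}$ passes through a $p_n$, and Claim \ref{TechnicalLemma} then yields $L_{g^\star}(\alpha_{s_0})>\pi$, contradicting the bound above.

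The only delicate point is the very first step: verifying that $\tilde{\Omega}<1$ on a genuine open portion of $\gamma_n$, which in turn gives $\ell_n<\pi$ and supplies the slack needed to reparametrize the lift into a smooth timelike curve. Once this is in place, the homotopy-theoretic argument is essentially identical to the lightlike case, the only modification being the need to check that both alternatives provided by Lemma \ref{Lemma} intercept the midpoint set $\{p_n\}_{n\in\N}$.
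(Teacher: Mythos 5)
Your argument is correct, and its second half (project the hypothetical causal homotopy to $\s^2$, use causality of the longitudinal curves to get $L_{g^\star}(\alpha_s)\leq\pi$, invoke Lemma \ref{Lemma} to force some $\alpha_{s_0}$ through a midpoint $p_n$, and contradict Claim \ref{TechnicalLemma}) is exactly the paper's argument. Where you genuinely deviate is in how the timelike curves are produced. The paper keeps the lightlike lifts $\Gamma_n$ of Subsection \ref{subsecCausal}, observes that since $\tilde\Omega<\Omega$ at $q_n$ they are causal with timelike speed at $t=\pi/2$ (hence not lightlike pregeodesics), and deforms each into a timelike $\tilde\Gamma_n$ via a fixed-endpoint variation through timelike longitudinal curves (O'Neill, Prop.\ 10.46); since that variation is itself a causal homotopy, a causal homotopy between $\tilde\Gamma_i$ and $\tilde\Gamma_j$ would give one between $\Gamma_i$ and $\Gamma_j$, and then the projection/Lemma \ref{Lemma} argument applies. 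You instead build the timelike curves explicitly: along $\gamma_n$ one has $\Omega\equiv 1$, so $\tilde\Omega=1-f\leq 1$ with strict inequality near $q_n$, whence $\ell_n=L_{g^\star}(\gamma_n)<\pi$, and the constant-rapidity lift $\tau\mapsto(\tau,\bar\gamma_n(\tfrac{\ell_n}{\pi}\tau))$ is smooth and timelike with norm squared $(\ell_n/\pi)^2-1<0$. This buys two things: it bypasses the citation of O'Neill's variation result, and, because your $\tilde\Gamma_n$ projects exactly onto the meridian $\varphi_n$, Lemma \ref{Lemma} applies directly to the projected homotopy without the intermediate reduction from $\tilde\Gamma_i,\tilde\Gamma_j$ to $\Gamma_i,\Gamma_j$. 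You also make explicit a point the paper leaves implicit, namely that each of the two equatorial arcs \eqref{eeq1}, \eqref{eeq2} contains some $p_n$ (the arc between $\varphi_j<\varphi_i$ contains $p_i,\dots,p_{j-1}$, the complementary one contains $p_n$ for $n\geq j$), which is what makes Claim \ref{TechnicalLemma} applicable. The paper's route, in exchange, illustrates the general mechanism (any causal curve that is not a lightlike pregeodesic can be deformed, within its causal homotopy class, to a timelike one), but as a proof of Claim \ref{ExampleTimelike} your more explicit construction is equally valid.
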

\begin{proof}
Since $\tilde{\Omega}\leq \Omega$ and the inequality is strict at the points $q_n$, the curves $\Gamma_n$ are causal and their speed is timelike at the instant $t=\pi/2$ (thus, they are not lightlike pregeodesics). Then, each $\Gamma_n$ can be deformed to a timelike $\tilde{\Gamma}_n$ by means of a (fixed endpoints) 
variation of longitudinal timelike curves (see \cite[Proposition 10.46]{ON}). % or \cite[Proposition 3.29]{Ebook}).
Let us check that 
all these $\{\tilde{\Gamma}_n\}_{n\in \N}$ lie in different causal (hence timelike) homotopy classes. 
Otherwise, there would be indexes $i<j$ with $\Gamma_i$ and $\Gamma_j$ in the same causal homotopy class, i.e. connected by a causal homotopy $\mathscr{H}:[0,1]\times [0,\pi]\to M$. Projecting $\mathscr{H}$ onto $\s^2$ and applying lemma \ref{Lemma}, we would find a curve $\gamma$ fulfilling the three incompatible properties in claim \ref{TechnicalLemma}. 
\end{proof}

\vspace{0.5cm}
\begin{figure}[H]
\centering

\includegraphics[width=0.4\textwidth]{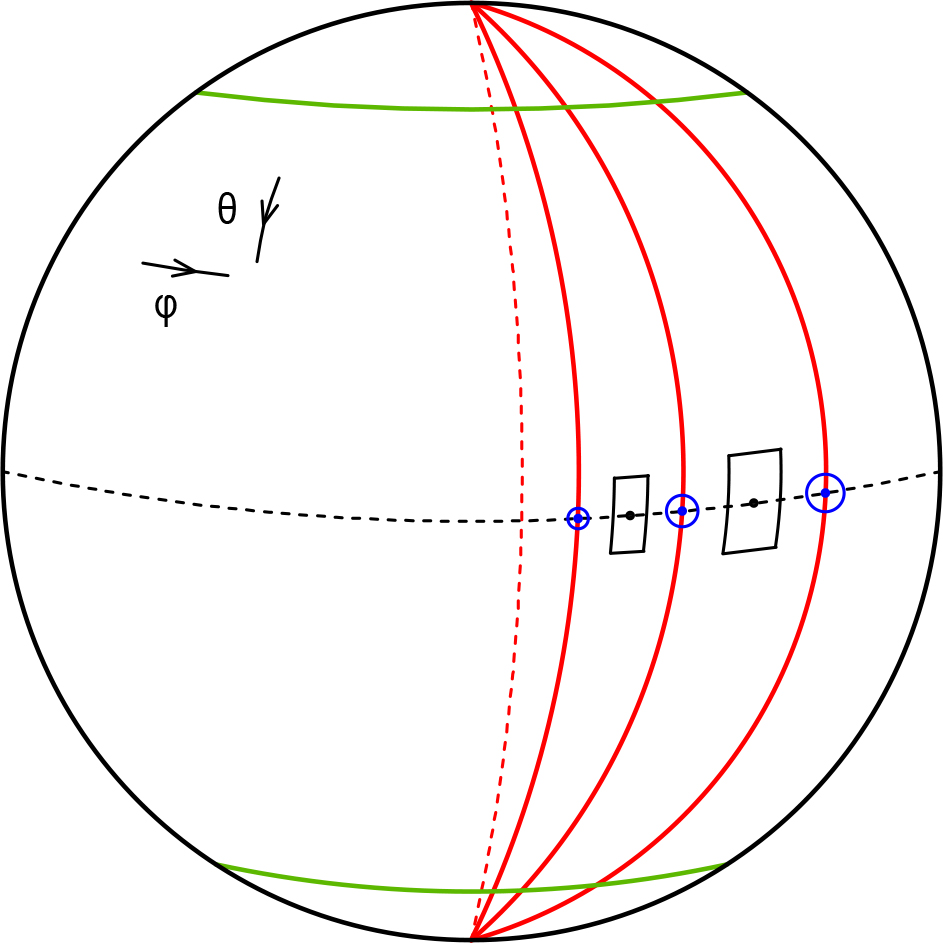}
\vspace{0.3cm}
\caption{Sketch for the conformal factors $\Omega, \tilde \Omega$. Red lines are the meridians with $\varphi=1/(n\pi)$. The green parallels $\theta=\pm \varepsilon_2 +\pi/2$ delimit polar caps where $\Omega (\equiv \tilde \Omega)\equiv 1$.  For $\tilde \Omega$ (timelike case), the points along the equator inside a black rectangle represent the points $p_n$'s inside the neighborhoods $U_n$'s, while the points inside a blue circle represent the $q_n$'s inside $V_n$'s.}
\label{grafico1}
\end{figure}

%\hspace*{-0.4cm}

\section{Topological homotopy classes}\label{TopologicalAndDiscuss}

As we will comment also in the last remarks of the paper (see the third comment in section \ref{LastSect}), our examples become more relevant when comparing them with the properties of (purely topological) homotopy classes of causal curves. As we see in the following result by D.H. Kim \cite{Kim} (see also \cite{Kim2}), the behavior in this case is completely different.

\begin{thm}\label{FiniteNumber} \cite{Kim, Kim2}
Let $(M,g)$ be a globally hyperbolic spacetime, and $p,q\in M$ with $p< q$. Then, the number of (topological) homotopy classes in $\mathcal{C}(p,q)$ is finite.
\end{thm}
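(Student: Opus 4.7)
The plan is to reduce the counting of topological homotopy classes in $\mathcal{C}(p,q)$ to a finiteness problem on the universal cover. Let $\pi : \tilde M \to M$ denote the universal covering, equipped with the pulled-back Lorentzian metric $\tilde g = \pi^* g$ and the induced time orientation; $\pi$ is then a local isometry preserving causal character. A classical fact (cf.\ \cite{Beem}) is that global hyperbolicity lifts to coverings---a smooth spacelike Cauchy surface $S$ in $M$ has preimage $\pi^{-1}(S)$ which is a Cauchy surface in $\tilde M$---so $(\tilde M, \tilde g)$ is again globally hyperbolic. Fix a lift $\tilde p \in \pi^{-1}(p)$; for every $\gamma \in \mathcal{C}(p,q)$, its unique continuous lift $\tilde \gamma$ starting at $\tilde p$ is future-directed causal-continuous (causal-continuity being a local property preserved by the local isometry $\pi$) and ends at some point of $\pi^{-1}(q)$.

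Standard covering-space monodromy then yields that two curves $\gamma_0, \gamma_1 \in \mathcal{C}(p,q)$ are topologically homotopic with fixed endpoints if and only if their lifts starting at $\tilde p$ end at the same point of $\pi^{-1}(q)$; conversely, every $\tilde q \in \pi^{-1}(q) \cap J^+_{\tilde M}(\tilde p)$ is the endpoint of the lift of some $\gamma \in \mathcal{C}(p,q)$, obtained by $\pi$-projecting any causal curve in $\tilde M$ from $\tilde p$ to $\tilde q$. Hence the number of topological homotopy classes in $\mathcal{C}(p,q)$ equals the cardinality of
\begin{equation*}
S := \pi^{-1}(q) \cap J^+_{\tilde M}(\tilde p),
\end{equation*}
and the theorem reduces to showing $|S| < \infty$.

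To bound $|S|$, fix any complete auxiliary Riemannian metric $h$ on $M$ and let $\tilde h := \pi^* h$, which is complete on $\tilde M$ since $\pi$ is a local isometry. The key ingredient is a well-known consequence of global hyperbolicity: on the compact causal diamond $K := J^+(p) \cap J^-(q)$, the $h$-length of causal curves is uniformly bounded, i.e.\ there exists $L < \infty$ with $L_h(\gamma) \le L$ for every $\gamma \in \mathcal{C}(p,q)$ (this is the estimate underlying the compactness of $\mathcal{C}(p,q)$, cf.\ \cite{S_berlin}). As $\pi$ is a local isometry, each lift satisfies $L_{\tilde h}(\tilde \gamma) = L_h(\gamma) \le L$, so its endpoint lies in the closed ball $\overline{B}_{\tilde h}(\tilde p, L)$. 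Hopf--Rinow applied to the complete Riemannian manifold $(\tilde M, \tilde h)$ makes this ball compact, and since $\pi^{-1}(q)$ is a closed discrete subset of $\tilde M$, the intersection $S \subseteq \overline{B}_{\tilde h}(\tilde p, L) \cap \pi^{-1}(q)$ is finite.

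The main obstacle, I expect, is precisely the uniform $h$-length bound in the last step: although intuitively plausible, its rigorous justification requires controlling the $h$-speed of causal curves in a relatively compact neighborhood of $K$ by showing that the $g$-causal cone is contained, locally, in an $h$-double cone of bounded aperture, and then piecing this together via a finite subcover. Everything else---the monodromy identification, the preservation of length under the local isometry $\pi$, and Hopf--Rinow---is standard, so the proof effectively rests on this length estimate.
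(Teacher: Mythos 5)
Your proposal is correct, but it takes a genuinely different route from the paper's --- indeed it is essentially the covering-space strategy of Kim's original argument in \cite{Kim}, which Section \ref{TopologicalAndDiscuss} deliberately replaces by an ``alternative proof''. You lift to the universal cover, identify fixed-endpoint homotopy classes with the endpoints of lifts via monodromy, and bound the number of admissible endpoints by combining a uniform bound $L$ on the $h$-length of curves in $\mathcal{C}(p,q)$ with completeness of $\pi^{*}h$ and Hopf--Rinow, so that the classes inject into the finite set $\pi^{-1}(q)\cap \overline{B}_{\tilde h}(\tilde p,L)$; this even gives a quantitative count. The paper instead stays in $M$: given infinitely many pairwise non-homotopic curves in $\mathcal{C}(p,q)$, it extracts a $C^{0}$-convergent subsequence towards a causal-continuous limit curve (\cite[Cor.~3.32, Prop.~3.34]{Beem}) and applies Lemma \ref{LemmaHomotopy}, a strong-deformation-retract ``tubular neighborhood'' of the limit curve, to force all curves of the subsequence into a single class --- the retraction lemma for merely causal-continuous curves being the new technical content, and covering-space theory being avoided altogether. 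Two caveats on your write-up: the lifted global hyperbolicity of $\tilde M$, while true, is never actually used (your length estimate is proved downstairs and only lifted); and the uniform $h$-length bound does not follow from the local cone-aperture comparison plus a finite subcover alone, since without strong causality a causal curve confined to a compact set could re-enter the same chart arbitrarily often --- the standard argument uses causally convex neighborhoods (or parametrization by a smooth Cauchy time function) exactly as in the proofs of compactness of $\mathcal{C}(p,q)$, so your citation of that estimate as known is legitimate, but your sketch of its proof is incomplete as stated.
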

This can be also paraphrased by saying that the classes of homotopy 
of curves connecting $p$ and $q$ {\em which contain a causal curve} must be finite ---in contrast with the previous examples for causal or timelike classes. 
 Next,  an alternative proof
% \footnote{The proof in \cite[Theorem 3.1]{Kim} has a small gap, also amendable with the same tools by the author.} 
is provided, by developing a notion of tubular neighborhoods applicable to causal-continuous curves. 

Recall that, for any topological space $X$, a subset $C$ is called a {\em strong retract of deformation} if there exists a homotopy $H: [0,1]\times X \rightarrow X$ such that $H(0,\cdot)$ is the identity in $X$, $H(1,X)=C$ and $H(s,x)=x$ for all $x\in C$ and $s\in[0,1]$.   The 
next lemma is  geometrically intuitive, and makes cleaner our argument for the proof of  Theorem \ref{FiniteNumber}. We state it under the ambient hypothesis of global hyperbolicity for simplicity, but the proof  can be adapted easily to {\em embedded} causal-continuous curves
%(defined on compact intervals) 
in any spacetime with some local work. 
%Recall that the standard definition of future-directed causal (or non-spacelike) continuous curve in \cite[Section 3.2]{Beem} is being used.

%As we will see, this result can be easily obtained applying well-known concepts like limit curves or the $C^0$-topology. In \cite[Theorem 3.1]{Kim}, a proof based on the universal covering of the spacetime is provided. However, following the notation of \cite{Kim}, we believe that the image of the mapping $\varphi$ is not necessarily contained in $J^-(\widetilde{x})$, and that is why we suggest here another way to reason. 

\begin{lemma}\label{LemmaHomotopy}
Let $\Gamma: [a,b]\rightarrow M$ be a future-directed causal-continuous curve. Then, $\Gamma$ admits an extension (as a causal-continuous curve) $\tilde\Gamma: (\tilde a,\tilde b)\rightarrow M$, $[a,b]\subset (\tilde a,\tilde b)$, $\tilde \Gamma|_{[a,b]}=\Gamma$, and a neighborhood $V$ such that the image of $\tilde \Gamma$ 
%(\tilde a,\tilde b)$ 
is a strong deformation retract of $V$.
\end{lemma}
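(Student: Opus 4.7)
The plan is to use the Bernal-S\'anchez smooth splitting of the globally hyperbolic spacetime to present $\tilde\Gamma$ as a graph over the time axis, and then build the tubular neighborhood by surrounding this graph with small Riemannian balls taken from the Cauchy slices. First I would extend $\Gamma$: at $p=\Gamma(a)$ pick any past-directed timelike $v\in T_pM$ and set $\sigma(s)=\exp_p((a-s)v)$ for $s\in[a-\varepsilon,a]$; this is a future-directed timelike geodesic ending at $p$, and its concatenation with $\Gamma$ is future-directed causal-continuous (on a convex neighborhood of the junction, transitivity of the local causal order takes care of the mixed pairs). The analogous procedure at $\Gamma(b)$ yields the extension $\tilde\Gamma:(\tilde a,\tilde b)\to M$.

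Next, invoke the Bernal-S\'anchez theorem to fix a diffeomorphism $\Psi: M\to \R\times S$ of the form $\Psi(x)=(\tau(x),\pi_S(x))$, where $\tau$ is a smooth Cauchy time function and $S=\tau^{-1}(0)$. Since $\tau$ is strictly increasing along every future-directed causal-continuous curve (apply the local defining inequality on a finite subdivision covered by convex neighborhoods of points of $\tilde\Gamma([a,b])$), $\tau\circ\tilde\Gamma$ is a strictly increasing continuous map onto an open interval $I=(\alpha,\beta)$, hence a homeomorphism. Setting $\mu=(\tau\circ\tilde\Gamma)^{-1}$ and $\gamma(s):=\pi_S(\tilde\Gamma(\mu(s)))$ yields a continuous $\gamma:I\to S$ for which the image of $\tilde\Gamma$ coincides, under $\Psi$, with the graph $\{(s,\gamma(s)):s\in I\}$.

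Now fix a complete Riemannian metric $h$ on $S$ with positive injectivity radius function $\mathrm{inj}_h$, choose a continuous $r:I\to(0,\infty)$ with $r(s)<\mathrm{inj}_h(\gamma(s))$, and set
\[V=\{(s,y)\in I\times S:d_h(y,\gamma(s))<r(s)\}\subset\R\times S\cong M,\]
which is open by continuity of $\gamma$, $r$ and $d_h$. Define $H:[0,1]\times V\to V$ by
\[H(\lambda,(s,y))=\bigl(s,\exp^h_{\gamma(s)}\bigl((1-\lambda)(\exp^h_{\gamma(s)})^{-1}(y)\bigr)\bigr);\]
this stays in $V$ because the scaled logarithm has $h$-norm strictly below $r(s)$, it is continuous (the Riemannian exponential and its local inverse depend smoothly on the base point, and $\gamma$ is continuous), reduces to the identity at $\lambda=0$, maps $V$ onto the graph at $\lambda=1$, and fixes every $(s,\gamma(s))$ for all $\lambda$. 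Hence the image of $\tilde\Gamma$ is a strong deformation retract of $V$.

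The main obstacle I anticipate is the careful verification that $\tau\circ\tilde\Gamma$ is strictly increasing and, consequently, that $\gamma$ is continuous in the merely causal-continuous setting; both reduce to standard local causal-geometric arguments on a finite covering by convex neighborhoods but deserve explicit mention, as the definition given in Section~\ref{s2} is weaker than piecewise-smoothness. Once the graph form has been secured, the rest is routine Riemannian tubular-neighborhood theory, and no further obstruction arises from the low regularity allowed for $\tilde\Gamma$.
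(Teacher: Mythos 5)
Your proof is correct, and it shares the paper's starting point (extend $\Gamma$ past its endpoints by timelike arcs and use the Bernal--S\'anchez splitting to present $\tilde\Gamma$ as a graph $t\mapsto(t,\gamma(t))$ over the time function), but the construction of the retraction is genuinely different. The paper stays purely topological on the slice $S$: it covers the compact extended image by finitely many ``chart tubes'' $C_{s_i}$ built from homeomorphisms $\psi_{s_i}:U_{s_i}\to B(0,2)\subset\R^{n-1}$, defines on each a straight-line strong deformation retraction in the chart, cuts it off with bump functions so that it is the identity outside the tube, and then concatenates the $k$ homotopies to retract $V=\cup_i C'_{s_i}$ onto the curve. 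You instead introduce an auxiliary complete Riemannian metric $h$ on $S$ and contract each fiber $\{s\}\times B_h(\gamma(s),r(s))$ along $h$-geodesics via $\exp^h_{\gamma(s)}$, with $r(s)<\mathrm{inj}_h(\gamma(s))$; this gives a single explicit global homotopy with no gluing or concatenation, and it retracts $V$ exactly onto the full image of $\tilde\Gamma$, which matches the statement cleanly. The price is the Riemannian machinery you invoke implicitly: existence of a complete metric (Nomizu--Ozeki), positivity and lower semicontinuity/continuity of $\mathrm{inj}_h$ so that a continuous $r(s)$ below it can be chosen on the open interval $I$, and joint continuity of $(x,y)\mapsto(\exp^h_x)^{-1}(y)$ on the region $d_h(x,y)<\mathrm{inj}_h(x)$ --- all standard, but worth a sentence each. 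Your treatment of the extension (future-directed timelike geodesic ending at $\Gamma(a)$, causal continuity at the junction via transitivity of $<_U$ in one convex neighborhood) and of the strict monotonicity of $\tau\circ\tilde\Gamma$ is at least as careful as the paper's, which simply concatenates with integral curves of $\partial_t$ without comment.
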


\begin{proof}
With no loss of generality (see \cite{BS}), assume that $M$ is a smooth product $\R\times S$,  with the natural projection $t:\R\times S\rightarrow \R$ a smooth time function which parametrizes $\Gamma$. Then, this is written as $\Gamma(t)=(t,\gamma(t))$, $t\in [a,b]$, and consider any extension $\tilde\Gamma(t)= (t,\tilde{\gamma}(t))$, $t\in \R$, as a continuous-causal curve (its domain to be restricted later); for example, concatenating $\Gamma$ at its endpoints with the integral curves of $\partial_t$. 

$V$ will be constructed as a sort of tubular neighborhood around $\tilde{\Gamma}$ (see Figure \ref{DeformationRetraction2}). 
 For every $s\in [a-1,b+1]$, choose a neighborhood $U_s$ 
of $\tilde\gamma(s)$ in $S$ which admits an homeomorphism 
$\psi_s$ with the usual open ball of radius $2$,  $B(0,2)\subset\R^{n-1}$. Then, for each $s$, take an interval $ I_s=(s-\varepsilon_s,s+\varepsilon_s)\subset (a-2,b+2)$  such that $\tilde\Gamma(I_s)$ is contained in the open ``tubular'' subset of $I_{s}\times U_{s}$ given as $C_s=C_s(I_s, 1)$, with
$$C_s(I_s, r):=\left\{\  \left(t,\psi_s^{-1}\left(  \psi_s(\tilde{\gamma}(t))+v  \right)\right) \; : \; t\in I_s, \; v\in B(0,r)  \right\}.$$
%for $r=1$. 
By compactness, %let us extract 
take a finite subcover $\{C_{s_i},\; i=1,\dots, k\}$ of the image of $\tilde{\Gamma}|_{[a-1,b+1]}$
%. (each $C_{s_i}$ with the  corresponding  $(U_{s_i},%\psi_{s_i})$).
%Now
and, for each $i$, define a continuous map 
$$
H_i : M\times [i-1,i] \rightarrow M \qquad \qquad \hbox{such that:}
$$ 
%which satisfies:
\begin{enumerate}[(i)]
\item $H_i(\cdot,i-1)$ is the identity in $M$, and $H_i(\cdot,s)$ is the identity in $M\setminus C_{s_i}$ for any $s\in [i-1,i]$.
\item The restriction of $H_i$ to $C_{s_i}'\times [i-1,i]$ is  a strong deformation retraction of $C_{s_i}'$ into $\tilde \Gamma(t(C_{s_i}'))$, where $C_{s_i}'= C_{s_i}([s-\varepsilon_{s_i}+\delta_i,s+\varepsilon_{s_i}-\delta_i], r=1/3)$, % is the ``shortening'' (in radius and height of $C_{s_i}$): 
%$$\left\{\  (t,\psi_{s_i}^{-1}\left(  \psi_{s_i}(\tilde{\gamma}(t))+v  \right) \; : \; t\in (s-\varepsilon_{s_i}+\delta_i,s+\varepsilon_{s_i}-\delta_i), \; v\in B(0,1/3)  \right\}, $$ 
with $\delta_i>0$ small  so that 
%we keep on covering the whole $[a-1,b+1]$, i.e., 
$[a-1,b+1]\subseteq \cup_{i=1}^k  (s-\varepsilon_{s_i}+\delta_i,s+\varepsilon_{s_i}-\delta_i)$.
%$M\setminus C_{s_i}''$, with $$C_{s_i}'':=\left\{\  (t,\psi_{s_i}^{-1}\left(  \psi_{s_i}(\gamma(t))+v  \right) \; : \; t\in I_s, \; v\in B(0,2/3)  \right\}.$$
\item $H_i(C_{s_i},i)\subseteq C_{s_i}$.
%\item $H_i(\cdot,i-1)$ is the identity in $M$.
\end{enumerate}

Once these properties are fulfilled, put $V:= \cup_{i=1}^k C_{s_i}'$. The concatenation of the functions $H_i$ yields a new function $M\times [0,k] \rightarrow M$, whose restriction to  $V\times [0,k]$ is the desired homotopy. To check that  (i)--(iii) can be fulfilled, recall the natural deformation $D_i: C_{s_i}\times [0,1] \to C_{s_i}$

%\begin{gather*}
%C_{s_i}\times [0,1] \to C_{s_i} \\
$$D_i((t,p),\lambda))= \bigg(t, \psi_{s_i}^{-1}\Big( \psi_{s_i}(\tilde{\gamma}(t))+ (1-\lambda)\big(\psi_{s_i}(p)-\psi_{s_i}(\tilde{\gamma}(t))\big)\Big)\bigg),
$$
%\end{gather*}
and modify it with bump functions into some $\tilde D_i$ so that: (a) $\tilde D_i=D_i$ on $C'_{s_i}\times [0,1]$, (b) $\tilde D_i(\cdot , \lambda)$ is the identity outside  $C_{s_i}([s-\varepsilon_{s_i}+(\delta_i/2),s+\varepsilon_{s_i}-(\delta_i/2)], r=2/3)$ for all $\lambda\in[0,1]$. $\Box$

\begin{figure}[H]
\centering
\includegraphics[width=0.125\textwidth]{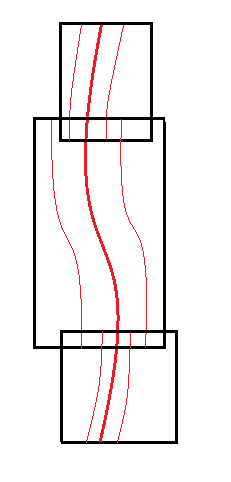}
\caption{Sketch for lemma \ref{LemmaHomotopy}. The red thick curve represents $\Gamma$, while thin red lines delimit the ``tubular'' subsets $C_{s}$. Black rectangles represent the products $I_{s}\times U_{s}$. }
\label{DeformationRetraction2}
\end{figure}

\vspace{-0.8cm}

\end{proof}

\begin{comment}
\begin{figure}[H]
\centering
\includegraphics[width=0.6\textwidth]{DeformationRetraction.png}
\caption{Sketch for lemma \ref{LemmaHomotopy}. The red curve represents $\Gamma$ and the black cylinders the sets $C_{s}, i=1,\dots,k$.} 
\label{DeformationRetraction}
\end{figure}
\end{comment}

%\begin{proof} 
\noindent {\em Proof of Theorem \ref{FiniteNumber}.}
Assume that, for each $n\in \N$, $\Gamma_n:[0,1]\to M$ is a curve of $\mathcal{C}(p,q)$, with $\Gamma_i$ and $\Gamma_j$ non-homotopic whenever $i\neq j$. Global hyperbolicity guarantees the existence of a limit curve of the sequence (see \cite[Corollary 3.32]{Beem}), which is a future-directed causal-continuous curve $\Gamma:[0,1]\to M$ from $p$ to $q$. Moreover, there exists a subsequence $\{\Gamma_m\}_m\subseteq\{\Gamma_n\}_n$ such that $\{\Gamma_m\}$ converges to $\Gamma$ in the $C^0$-topology (see  \cite[Proposition 3.34]{Beem}). 
Thus, taking the neighborhood $V$ provided by  Lemma \ref{LemmaHomotopy}, all $\Gamma_m$ lies in $V$  for large $m$.
 Then, the retraction claimed for $V$ yields the contradiction  that all $\Gamma_i$ are homotopic to $\Gamma$ and, thus, to each other.
$\Box$

\section{Further remarks and conclusions}\label{LastSect}

\begin{comment}
\begin{prop}
Let $(M,g)$ be a spacetime and $p<q$. 

(1) Each class $\mathcal{C}$ of causal homotopy in $\mathcal{C}(p,q)$ is a closed subset of 
$\mathcal{C}(p,q)$ endowed with the $C^0$ topology.

(2) Each class $C$ of timelike homotopy in $C(p,q)$ is a  closed subset of $C(p,q)$ endowed with the $C^0$ topology.

\end{prop}
\begin{proof} (1) Let $\{\Gamma_n\}_n \subset \mathcal{C}(p.q)$ a sequence of causally homotopic curves, and $\gamma\in \mathcal{C}(p,q)$ a $C^0$ limit curve, all of them parametrized at constant speed for an auxiliary Riemannian metric on some prescribed $[a,b]$. Construct a causal homotopy $\mathcal{H}: [0,1]\times [a,b]\rightarrow M$ such that, for each $n\in\N$,
$\mathcal{H}(s,\cdot)$ with   $s\in [1/(n+1), 1/n]$ is a causal homotopy from $\Gamma_n$ to $\Gamma_{n+1}$ and $\mathcal{H}(0,\cdot)=\Gamma$.

(2) Notice first that all the considered timelike curves are timelike  homotopic to piecewise smooth timelike geodesics (prove this locally by taking first a convex neighborhood of each point of the curve with timecones wider than the cones for a Lorentizian scalar product in that coordinates; then globalize by using a Lebesgue number of the covering).  
\end{proof}
\end{comment}
Finally, let us discuss some mathematical aspects about the examples obtained in previous Section \ref{s3}, as well as provide some physical interpretations of them.

(1) First, notice the robustness of the examples provided, since the underlying arguments do not rely on the special geometry or the abundant symmetries of the 2-sphere ---this only simplifies the exposition. Clearly, the example also works in higher dimensions (replacing $\s^2$ by $\s^m$, $m>2$) or if the spacetime is not spatially compact (replace $\s^2$ by  $\s^2\times \R$, putting the natural metric on the $\R$ part). Indeed, an example can be constructed easily starting at any globally hyperbolic spacetime containing  two horismotically related points $p, q$ with focusing lightlike geodesics   (i.e., whenever $p$ and $q$ are connected by infinitely many lightlike geodesics  but not by any timelike curve) and, then, modifying conveniently the metric so that infinitely many of them cannot be pairwise connected by means of a causal homotopy.

(2) The sequence of causal (or timelike) curves $\Gamma_n$ in different causal homotopy classes has, indeed, a limit curve $\Gamma$, which also lies in a different class. Notice that, if infinitely many $\Gamma_n$'s remained in a single homotopy class, then the limit curve would remain in that same homotopy class.  This can be checked directly for causal homotopy classes (concatenate causal homotopies from each $\Gamma_n$ to $\Gamma_{n+1}$ and add $\Gamma$ as a final longitudinal curve)
as well as for timelike ones (by using timelike chains of points which can be thought as broken geodesics as in \cite{Beem}, \cite{MinSan}). This underlies the fact that there exist maximizers of the length and other functionals in each timelike or causal homotopy class  \cite{MinSan}. 
Moreover, most of the results in calculus  of variations for causal geodesics which concern the existence of an ``arbitrarily close causal curve'' of a prescribed one can be re-stated  as the existence of a causal homotopy with the original curve, even though this is not always explicitly said in the literature\footnote{Let us point out that, even though the treatment of variations by lightlike geodesics in O'Neill's book is excellent and can be regarded as the standard approach (see \cite[Chapter 10]{ON}), the existence of the homotopies is not always stated clearly there, and one has to go into the details of the proofs. In particular,  this includes \cite[Prop. 48]{ON} (a result that, as pointed out by  Galloway \cite{Gallo}, contains also a small gap of a different type in its proof, which can be solved  either by using \cite{Gallo} or
\cite[Th. 10.72]{Beem}, \cite[Prop. 4.5.12]{HE});
see \cite[Proposition 6.8]{AJ} for a detailed proof of the existence of causal homotopies in this case
%under the assumptions of \cite[Prop. 48]{ON} 
(valid even in the more general setting of Lorentz-Finsler spacetimes).}.   
 
(3) There is a dramatic difference when one considers causal homotopies instead of topological ones as in \cite{Kim}. In the former case, the constraint on the longitudinal curves to be causal restrict the allowed homotopies and, so, permits the existence of more homotopy classes. Mathematically, this is a natural restriction in the Lorentzian setting, and opens the possibility of results beyond classical Riemannian analogies (compare with \cite{Mukuno}).  Physically, the constraint means that the longitudinal curves  must be {\em paths}, i.e. curves that, in principle, can represent (massive or massless) particles.  Being this constraint suitable for this setting, it is natural to wonder: if a path $\Gamma$ connecting two events $p, q$  is obtained as a limit  of another paths $\{\Gamma_n\}$, can the former be regarded as a deformation of the later? We have seen here that the answer is negative, as the curves $\Gamma_n$ can be regarded as arbitrarily close to $\Gamma$ but $\Gamma$ is not reachable from each $\Gamma_n$ by using paths. 

(4) This may affect the intuition of strong cosmic censorship hypothesis. Recall that, from a mathematical viewpoint, one says that a causal spacetime contains a {\em naked singularity} when it admits two points $p, q$ such that $J^+(p)\cap J^-(q)$ is not compact (so that a causal spacetime is said globally hyperbolic when it does not contain naked singularities). The physical interpretation of this property is as follows. If $J^+(p)\cap J^-(q)$ is not compact then it contains a nonconvergent sequence of points $\{r_n\}\subset J^+(p)\cap J^-(q)$ and, thus, any sequence of paths $\{\Gamma_n\}$ each one starting at $p$, crossing $r_n$ and ending at $q$ satisfy: there exist a limit path $\Gamma^+$ that starts at $p$ but it cannot arrive at $q$ (as well as a dual one $\Gamma^-$ ending at $q$ but which cannot start at $p$). This impossibility to arrive means that  $\Gamma^+$ ``disappears'' from the spacetime (i.e $\Gamma^+$ arrives to some sort of singularity) and, moreover, this disappearance can be observed from $q$ (the singularity is naked). One could expect that the path $\Gamma^+$ were reachable continuously from paths $\Gamma_n$ but this was known to be false. In fact, one could think that the failure of this property was also a consequence  of the existence of the singularity. However,  as we have seen, this is not the case: even when a spacetime is free of naked singularities, a path $\Gamma$ obtained as a limit curve  of paths $\Gamma_n$ may be non-reachable continuously from any $\Gamma_n$.

(5) A worthy property, known by Riemannian specialists, holds for the Riemannian part of our second example.
 Namely, in the first example,   the meridian of azimuthal angle $\varphi=0$ from $\mathcal{N}$ to $\mathcal{S}$ attains both, its first conjugate and cut point, at $\mathcal{S}$. Nevertheless, in the second one, its first conjugate point is still $\mathcal{S}$, but there exists a previous cut point, since meridians $\varphi=1/(n\pi)$ are shorter curves connecting $\mathcal{N}$ with $\mathcal{S}$. Moreover, the  cut point approaches the first conjugate point when one consider small neighboords of the half lune delimited by $\gamma$ and each $\gamma_n$. 
 %However, notice that the ideas underlying our example could be adapted for a two dimensional one, where lightlike %geodesics cannot contain conjugate points.

\section*{Acknowledgments}
The authors acknowledge Professor D.H. Kim, from Dankook Univ., his comments on reference \cite{Kim}.
They also acknowledge the support of the  project
MTM2013-47828-C2-1-P (Spanish MINECO with FEDER funds) and the first-named one also acknowledges the grant ``Beca de iniciación a la investigación para másteres 2015''  (U. Granada).

\end{document}